\theoremstyle{plain}
\newtheorem{theorem}{Theorem}[section]
\newtheorem{proposition}[theorem]{Proposition}
\newtheorem{lemma}[theorem]{Lemma}
\theoremstyle{definition}
\theoremstyle{remark}
\newtheorem{example}{Example}
\newcommand{\subjclass}[2][1991]{%
  \let\@oldtitle\@title%
  \gdef\@title{\@oldtitle\footnotetext{#1 \emph{Mathematics subject classification.} #2}}%
}
\newcommand{\keywords}[1]{%
  \let\@@oldtitle\@title%
  \gdef\@title{\@@oldtitle\footnotetext{\emph{Key words and phrases.} #1.}}%
}
\title{Another construction of edge-regular graphs with regular cliques}
\author{Gary R. W. Greaves\thanks{Supported by  the Singapore
Ministry of Education Academic Research Fund (Tier 1);  grant number:  RG127/16.}\\
  School of Physical and Mathematical Sciences, \\
  Nanyang Technological University, \\
   21 Nanyang Link, Singapore 637371\\
  {\tt grwgrvs@gmail.com}
\and
  Jack H. Koolen\thanks{Partially supported by the National Natural Science Foundation of China
(No. 11471009 and No.11671376).}\\
  Wen-Tsun Wu Key Laboratory of CAS,\\
  School of Mathematical Sciences, \\
  University of Science and Technology of China, \\
  Hefei, Anhui, 230026, P.R. China\\
  {\tt koolen@ustc.edu.cn}
}
\subjclass[2010]{05C05}
\keywords{Edge-regular graph, regular clique, strongly regular graph}
\begin{document}
	
\maketitle

\begin{abstract}
	We exhibit a new construction of edge-regular graphs with regular cliques that are not strongly regular.
	The infinite family of graphs resulting from this construction includes an edge-regular graph with parameters $(24,8,2)$.
	We also show that edge-regular graphs with $1$-regular cliques that are not strongly regular must have at least $24$ vertices.
\end{abstract}

\section{Introduction and definitions}

We begin with various definitions of regularity.
A $v$-vertex graph $\Gamma$ is called \textbf{$k$-regular} if there exists a $k$ such that each vertex of $\Gamma$ has degree $k$.
A $v$-vertex, $k$-regular non-empty graph is called \textbf{edge-regular} with parameters $(v,k,\lambda)$ if 
every pair of adjacent vertices $x \sim y$ have $\lambda$ common neighbours.
A $(v,k,\lambda)$-edge-regular graph is called \textbf{strongly regular} with parameters $(v,k,\lambda,\mu)$ if 
every pair of distinct nonadjacent vertices $x \not \sim y$ have $\mu$ common neighbours.
A clique $\mathcal C$ is called \textbf{regular} (or $e$-regular) if every vertex not in $ \mathcal C$ is adjacent to a constant number $e>0$ of vertices in $\mathcal C$.

Recently, the authors~\cite{Greaves:2017qy} provided an infinite family of non-strongly-regular, edge-regular graphs having regular cliques, thus answering a question of Neumaier~\cite[Page 248]{Neu:regCliques}.
The smallest graph in this family has parameters $(28,9,2)$.
In this note we offer a new construction that gives rise to a non-strongly-regular, edge-regular graph $\mathcal G$ with parameters $(24,8,2)$ having a $1$-regular clique.
In fact, $\mathcal G$ is isomorphic to one of the four examples found by Goryainov and Shalaginov~\cite{Goryainov14:Deza}.
Recently, however, Evans et al.~\cite{EGP} discovered an edge-regular, but not strongly regular graph on $16$ vertices that has $2$-regular cliques with order $4$, and proved that, up to isomorphism, this is the unique edge-regular, but not strongly regular graph on at most $16$ vertices having a regular clique.

A graph $\Gamma$ of diameter $d$ is called \textbf{$a$-antipodal} if the relation of being at distance $d$ or distance $0$ is an equivalence relation on the vertices of $\Gamma$ with equivalence classes of size $a$.
A set of cliques of a graph $\Gamma$ that partition the vertex set of $\Gamma$ is called a \textbf{spread} in $\Gamma$.
Distance regular graphs and generalised quadrangles are edge-regular graphs satisfying further regularity conditions, for their definitions, see Brouwer and Haemers~\cite{brou:spec11} (see below for the definition of a distance regular graph).
Brouwer~\cite{Brouwer84} gave a construction for antipodal distance regular graphs from generalised quadrangles having a spread of regular cliques.
Inspired by Brouwer, our construction is a generalisation of the other direction, i.e., we construct graphs from antipodal distance-regular graphs of diameter three. 

In Section~\ref{sec:construction} we present our construction and in Section~\ref{sec:3} we show that non-strongly-regular, edge-regular graphs having $1$-regular cliques must have at least $24$ vertices.

\section{Construction} 
\label{sec:construction}

Let $\Gamma$ be a graph with diameter $d$.
We call $\Gamma$ \textbf{distance-regular} if, for any two vertices $x, y \in V(\Gamma)$, the number of vertices at distance $i$ from $x$ and distance $j$ from $y$ depends only on $i$, $j$, and the distance from $x$ to $y$.
It is clear that distance regular graphs are edge-regular.

Let $\Gamma$ be a 
graph of diameter $d$.
For a given $t$, make $t$ copies $\Gamma^{(1)},\dots,\Gamma^{(t)}$ of $\Gamma$.
For each vertex $x \in V(\Gamma)$,  denote by $\Gamma_i(x)$ the set of vertices at distance $i$ from $x$ and denote by $x_j$ the corresponding copy of $x$ in the graph $\Gamma^{(j)}$.
Define the sets
\begin{align*}
	E_1(\Gamma) &= \{ \{x_i,y_j \} \; : \; x \in V(\Gamma), \, y \in \Gamma_d(x), \text{ and }  i, j \in \{1,\dots,t\} \} \\
	E_2(\Gamma) &= \{ \{x_i,x_j \} \; : \; x \in V(\Gamma) \text{ and } i \ne j \}.
\end{align*}
Let $\hat \Gamma$ denote the disjoint union of the graphs $\Gamma^{(1)},\dots,\Gamma^{(t)}$.
Then define $F_t(\Gamma)$ to be the graph with vertex set $V(F_t(\Gamma)) = V(\hat \Gamma)$ and edge set
\[
	E(F_t(\Gamma)) = E(\hat \Gamma) \cup E_1(\Gamma) \cup E_2(\Gamma).
\]

\begin{theorem}\label{thm:main}
	Let $\Gamma$ be an $a$-antipodal distance-regular graph of diameter $3$ with edge-regular parameters $(v, k, \lambda)$ such that $a$ is a proper divisor of $\lambda + 2$.
	Then
	\begin{enumerate}
		\item $F_{\frac{\lambda+2}{a}}(\Gamma)$ has a spread of $1$-regular cliques each of size $\lambda+2$;
		\item $F_{\frac{\lambda+2}{a}}(\Gamma)$ is $(v(\lambda+2)/a, k+\lambda+1, \lambda)$-edge-regular;
		\item $F_{\frac{\lambda+2}{a}}(\Gamma)$ is not strongly regular.
	\end{enumerate}
\end{theorem}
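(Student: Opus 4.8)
The plan is to handle the three parts in turn, after recording two elementary properties of $\Gamma$. Throughout write $t=(\lambda+2)/a$; since $a$ is a \emph{proper} divisor of $\lambda+2$, $t\ge 2$. For $x\in V(\Gamma)$ let $C_x$ be the antipodal class containing $x$, so that (as $\Gamma$ has diameter $3$) $d_\Gamma(x,y)=3$ holds exactly when $y\in C_x\setminus\{x\}$. The two facts I will use, both immediate from the triangle inequality in $\Gamma$, are: (i) if $x\sim y$ in $\Gamma$ then $\Gamma_3(x)\cap\Gamma_1(y)=\emptyset$, and two vertices at distance $3$ in $\Gamma$ have no common neighbour; and (ii) a vertex $z$ has at most one neighbour in each antipodal class not containing it (two such would be mutually at distance at most $2$). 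Since it is well known that an antipodal distance-regular graph of diameter $3$ is an antipodal cover of the complete graph $K_{v/a}$, the set $\Gamma_1(z)$ meets every antipodal class other than $C_z$; combined with (ii), $z$ has exactly one neighbour in each antipodal class not containing it.

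For part~(1), to an antipodal class $C$ I associate $\mathcal C=\{\,x_j : x\in C,\ j\in\{1,\dots,t\}\,\}$. Any two distinct members of $\mathcal C$ are adjacent — via $E_2(\Gamma)$ if they are copies of the same vertex and via $E_1(\Gamma)$ otherwise (distinct vertices of $C$ being at distance $3$) — so $\mathcal C$ is a clique of size $at=\lambda+2$, and these cliques partition $V(F_t(\Gamma))$ because the classes partition $V(\Gamma)$. If $z_i\notin\mathcal C$ (so $z\notin C$), then a copy $x_j$ with $x\in C$ is adjacent to $z_i$ exactly when $i=j$ and $x\sim z$ in $\Gamma$ (the alternatives $x\in\Gamma_3(z)$ or $x=z$ would force $z\in C$); hence $z_i$ has $|\Gamma_1(z)\cap C|=1$ neighbours in $\mathcal C$, which is part~(1).

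For part~(2): $F_t(\Gamma)$ has $tv=v(\lambda+2)/a$ vertices, and a vertex $z_i$ has exactly the neighbours $w_i$ ($w\sim z$, contributing $k$), $y_j$ ($y\in\Gamma_3(z)$, contributing $(a-1)t$), and $z_j$ ($j\ne i$, contributing $t-1$), for valency $k+(a-1)t+(t-1)=k+at-1=k+\lambda+1$. For edge-regularity I will split the edges of $F_t(\Gamma)$ into three types — $\{x_i,y_i\}$ with $x\sim_\Gamma y$ (from $E(\hat\Gamma)$), $\{x_i,y_j\}$ with $y\in\Gamma_3(x)$ (from $E_1(\Gamma)$, possibly $i=j$), and $\{x_i,x_j\}$ with $i\ne j$ (from $E_2(\Gamma)$) — and for each type count the common neighbours $u_\ell$ according to whether $u$ lies in $C_x$, in $C_y$, or in neither, and whether $\ell\in\{i,j\}$. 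For the $E(\hat\Gamma)$-edges this reduces, using (i) to discard copies of vertices of $\Gamma_3(x)\cup\Gamma_3(y)$, to the $\lambda$ common $\Gamma$-neighbours of $x$ and $y$; for the $E_1(\Gamma)$-edges the count is $(a-2)t$ (copies of the remaining vertices of $C_x$) plus $2(t-1)$ (the other copies of $x$ and of $y$), with (i) ensuring that common $\Gamma$-neighbours of the antipodal pair $x,y$ contribute nothing; and for the $E_2(\Gamma)$-edges it is $(a-1)t+(t-2)$. All three evaluate to $at-2=\lambda$.

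For part~(3), let $A$ be the adjacency matrix of $\Gamma$ and $P$ the $v\times(v/a)$ characteristic matrix of the antipodal partition, so that $A_3=PP^{\mathsf{T}}-I_v$ and the adjacency matrix of $F_t(\Gamma)$ is
\[
B=I_t\otimes A+J_t\otimes A_3+(J_t-I_t)\otimes I_v=I_t\otimes(A-I_v)+J_t\otimes PP^{\mathsf{T}}.
\]
Decomposing $\mathbb{R}^t$ into the all-ones line and its orthogonal complement (on which $J_t$ acts as $t$ and as $0$ respectively), the spectrum of $B$ is the spectrum of $A-I_v+tPP^{\mathsf{T}}$ together with $t-1$ copies of the spectrum of $A-I_v$. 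Since $\Gamma$ is distance-regular of diameter $3$ it has exactly four distinct eigenvalues, so $A-I_v$, and therefore $B$ (here $t\ge 2$ is used), has at least four distinct eigenvalues; as a strongly regular graph has at most three, $F_t(\Gamma)$ is not strongly regular. The hardest part will be the common-neighbour bookkeeping in part~(2), especially checking that the same-copy $E_1(\Gamma)$-edges behave correctly — which is exactly where fact~(i) is needed — whereas parts~(1) and~(3) are short once (i), (ii) and the tensor expression for $B$ are in hand.
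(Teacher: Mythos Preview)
Your proof is correct. Parts~(1) and~(2) follow the paper's approach, though where the paper handles edge-regularity by first invoking the $1$-regular spread from part~(1) (an edge inside a spread clique automatically has $\lambda$ common neighbours, and an edge outside any spread clique lies in a single copy $\Gamma^{(i)}$), you instead carry out the three-case count directly; both are fine, and your bookkeeping is accurate.

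Part~(3) is a genuinely different argument. The paper exhibits two non-adjacent pairs with distinct numbers of common neighbours: a pair $x_i,y_i$ in the same copy with $d_\Gamma(x,y)=2$ has $\nu_{x,y}+2$ common neighbours, whereas a non-adjacent pair $x_i,z_j$ across different copies has exactly $2$. Your route is spectral: the tensor decomposition $B=I_t\otimes(A-I_v)+J_t\otimes PP^{\mathsf T}$ shows that the spectrum of $A-I_v$ occurs (with multiplicity $t-1\geqslant 1$) in the spectrum of $B$, and since a diameter-$3$ distance-regular graph has four distinct eigenvalues, so does $F_t(\Gamma)$. The paper's argument is entirely elementary and yields explicit $\mu$-values, which is informative in its own right; your argument is shorter once the adjacency-matrix formula is in hand, and it simultaneously pins down much of the spectrum of $F_t(\Gamma)$, which could be useful for further analysis.
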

\begin{proof}
	Let $t = (\lambda+2)/a$.
	\begin{enumerate}
%
		\item Since $\Gamma$ is $a$-antipodal, its vertex set can be partitioned into $v/a$ $a$-subsets of $V(\Gamma)$ such that each subset contains a vertex and all its antipodes.
				 For each part $P$ in the partition, take a vertex $x \in P$ and define the set 
				 $$\mathcal C_x = \{ x_i \; : \; i \in \{ 1,\dots,t \} \} \cup \bigcup_{y\in \Gamma_3(x)} \{ y_i \; : \; i \in \{ 1,\dots,t \} \} \subset V(F(\Gamma)).$$
				 It is clear that each $\mathcal C_x$ is a clique in $F(\Gamma)$ of size $\lambda+2$ and that these cliques partition the vertex set of $F(\Gamma)$.
				 To see that these cliques are $1$-regular, consider a vertex $z$ not in the clique $\mathcal C_x$.
				 Let $\Gamma^{(i)}$ be the copy of $\Gamma$ containing $z$.
				 Since $\Gamma$ is distance-regular and has diameter $3$, the vertex $z$ must be adjacent to precisely one vertex in the set $\{ x_i \} \cup \{ y_i \; : \; y \in \Gamma_3(x) \}$.
				 Therefore $\mathcal C_x$ is $1$-regular.

				\item It is clear that $F(\Gamma)$ has $vt = v(\lambda+2)/a$ vertices.
				Let $x$ be a vertex of $F(\Gamma)$ inside the $i$th copy $\Gamma^{(i)}$ of $\Gamma$.
				Then $x$ is adjacent to $k+a-1$ vertices inside $\Gamma^{(i)}$ and $x$ is adjacent to the vertices $x_j$ and $y_j$ for all $j \ne i$ and $y \in \Gamma_3(x)$.
				Hence $x$ has valency $k+a-1+a(t-1) = k+\lambda+1$.
				Now suppose that $y$ is adjacent to $x$.
				If $y$ is in the clique $\mathcal C_x$ then, since $\mathcal C_x$ is $1$-regular, $y$ and $x$ have $\lambda$ common neighbours.
				Otherwise $y$ must be a vertex of $\Gamma^{(i)}$, where the number of common neighbours of $y$ and $x$ are equal to the number of common neighbours of adjacent vertices in $\Gamma$, which is $\lambda$.
		
		\item Let $x$ be a vertex of $F_t(\Gamma)$ inside the $i$th copy $\Gamma^{(i)}$ of $\Gamma$.
		Form the set $\mu(\Gamma) = \{ \nu_{y,z} \; : \; y, z \in V(\Gamma) \text{ and } y \not \sim z \}$, where $\nu_{y,z}$ denotes the number of common neighbours of $y$ and $z$.
		Since $\Gamma$ is not strongly regular, the set $\mu(\Gamma)$ must have at least $2$ elements.
		Furthermore, since $\Gamma$ has diameter $3$, we see that $0 \in \mu(\Gamma)$.
		Let $\eta \in \mu(\Gamma)$ with $\eta \ne 0$.
		Consider the vertices $y$ and $z$, where $y$ is in $\Gamma^{(i)}$ such that $y \not \sim x$ and $\nu_{x,y} = \eta$ and $z$ is in $\Gamma^{(j)}$ with $j \ne i$ and $z \not \sim x$.
		The number of common neighbours of $x$ and $y$ is $\eta + 2$ and the number of common neighbours of $x$ and $z$ is $2$.
		Hence $F_t(\Gamma)$ is not strongly regular. \qedhere
	\end{enumerate}
\end{proof}

A \textbf{Taylor graph} is a $2$-antipodal distance-regular graph of diameter $3$ (for a proper definition, see Brouwer, Cohen, and Neumaier~\cite[Page 13]{bcn89}).

\begin{example}
	Let $\Gamma$ be a Taylor graph with edge-regular parameters $(v,k,\lambda)$.
	It is known~\cite[Theorem 1.5.3]{bcn89} that $\lambda$ is even.
	By Theorem~\ref{thm:main}, the graph $F_{\lambda/2+1}(\Gamma)$ is a non-strongly-regular $(v(\lambda+2)/2,k+\lambda+1,\lambda)$-edge-regular graph having a $1$-regular clique.
	The smallest example of this family is the icosahedral graph $\mathcal P$, which has parameters $(12,5,2)$.
	The graph $F_2(\mathcal P)$ is a non-strongly-regular $(24,8,2)$-edge-regular graph having a $1$-regular clique, furthermore, $F_2(\mathcal P)$ is isomorphic to one of the four examples of Goryainov and Shalaginov~\cite{Goryainov14:Deza}.
\end{example}

We can also use other constructions of antipodal distance-regular graphs of diameter $3$ due to Brouwer, Hensel, and Mathon (see Godsil and Hensel~\cite{GodHen92} or Brouwer, Cohen, and Neumaier~\cite[Page 385]{bcn89}).
These constructions produce $a$-antipodal edge-regular graphs satisfying $\lambda +2 \equiv 0 \pmod a$ with $a \geqslant 3$.


\emph{Note added in proof:}
Sergey Goryainov observed that one can generalise the construction given in this section. One does not need an isomorphism between the antipodal graphs, a bijection between the fibre classes suffices. 

\section{At least $24$ vertices for $1$-regular cliques} 
\label{sec:3}


In the remainder of this note, we prove the following result.

\begin{theorem}\label{thm:24small}
	Let $\Gamma$ be an edge-regular graph with a $1$-regular clique that is not strongly regular.
	Then $\Gamma$ has at least $24$ vertices.
\end{theorem}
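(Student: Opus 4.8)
The plan is to proceed by a parameter-elimination argument: assume $\Gamma$ is a $(v,k,\lambda)$-edge-regular graph with an $e$-regular clique $\mathcal C$ of size, say, $c$, where $e=1$ and $\Gamma$ is not strongly regular, and show that all such parameter tuples with $v \leqslant 23$ lead to a contradiction. First I would record the standard counting identities relating the clique parameters to the edge-regular parameters. Counting edges between $\mathcal C$ and its complement in two ways gives $c(c-1) \leqslant k\,c$ type constraints, and more precisely, fixing a vertex $x \in \mathcal C$, its $k$ neighbours split into the $c-1$ inside $\mathcal C$ and $k-c+1$ outside; each outside neighbour of $x$ sees exactly $e=1$ vertex of $\mathcal C$, which must be $x$ itself, so counting pairs (outside vertex, edge into $\mathcal C$) yields $(v-c)\cdot 1 = c\,(k-c+1)$, i.e. $v = c + c(k-c+1) = c(k-c+2)$. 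Since $\mathcal C$ is a clique, every edge inside it has the other $c-2$ clique vertices as common neighbours, so $\lambda \geqslant c-2$; in fact the $1$-regularity forces $\lambda = c-2$ because a common neighbour of two adjacent clique vertices lying outside $\mathcal C$ would be adjacent to two vertices of $\mathcal C$. Hence $c = \lambda+2$ and $v = (\lambda+2)k - (\lambda+2)\lambda$, giving a clean two-parameter description $v = (\lambda+2)(k-\lambda)$.

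Next I would impose $v \leqslant 23$ together with the obvious inequalities $k \geqslant \lambda+1$ (so that edges exist and $\mathcal C$ is non-trivial, in fact $k > \lambda+1$ since a clique of size $\lambda+2$ needs $k \geqslant \lambda+1$ and the $1$-regular clique is proper) and $v > k+1$, and enumerate the finitely many triples $(v,k,\lambda)$ satisfying $v=(\lambda+2)(k-\lambda)$ with $v\leqslant 23$. For each surviving triple I would apply further standard feasibility conditions: integrality of eigenvalue multiplicities is not directly available since $\Gamma$ need not be strongly regular, but I can use (i) the handshake/edge-count parity conditions ($vk$ even, $vk\lambda/2$ an integer, etc.), (ii) the Deza-graph or local structure constraints coming from the neighbourhood of a vertex being a $\lambda$-regular graph on $k$ vertices (so $k\lambda$ even and $\lambda < k$), and (iii) the key point that non-strong-regularity must be witnessed, i.e. there exist non-adjacent pairs with differing numbers of common neighbours — together with a $\mu$-bound argument: for a vertex $x\notin\mathcal C$, it has exactly one neighbour $y\in\mathcal C$, and the other $c-1=\lambda+1$ vertices of $\mathcal C$ are at distance $2$ from $x$, each contributing to a $\mu$-type count, which constrains how common-neighbour counts can vary. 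I would also invoke the result quoted in the introduction (Evans et al.) that there is no such graph on at most $16$ vertices other than the unique edge-regular non-strongly-regular graph with $2$-regular cliques — but since we need $e=1$, that example is excluded, so the $16$-vertex (and smaller) cases are cleared, leaving only triples with $17 \leqslant v \leqslant 23$.

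Concretely, the equation $v=(\lambda+2)(k-\lambda)$ with $v\leqslant 23$ leaves very few candidates; for instance $\lambda=2$ forces $v=4(k-2)$, so $v\in\{20\text{ (}k=7\text{)},24\text{ (}k=8\text{)}\}$ in range, and $v=20,k=7,\lambda=2$ must be ruled out; $\lambda=1$ gives $v=3(k-1)$ so $v\in\{18\,(k=7),21\,(k=8)\}$; $\lambda=0$ gives $v=2(k)$, i.e.\ triangle-free with a $2$-clique (an edge) that is $1$-regular, meaning every non-endpoint sees exactly one endpoint, which forces a very rigid bipartite-like structure that I expect to be either strongly regular or impossible; $\lambda=3$ gives $v=5(k-3)$ so $v=20\,(k=7)$; $\lambda\geqslant 4$ already pushes $v$ past $23$ except degenerate cases. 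So the real work is eliminating a short explicit list, roughly $(18,7,1),(21,8,1),(20,7,2),(20,7,3)$ and the $\lambda=0$ family, by local combinatorial analysis of the neighbourhood graph and the distance-$2$ structure. The main obstacle I anticipate is exactly this case analysis: for each remaining triple one must show that the rigidity imposed by having a spread-like $1$-regular clique (every outside vertex attaching to $\mathcal C$ at a unique vertex) combined with edge-regularity forces strong regularity, and this likely requires a careful eigenvalue or quasi-design argument rather than pure counting — in particular showing that the "non-clique" common-neighbour counts are forced to be constant. I would handle the cleanest case ($\lambda=0$, where triangle-freeness plus an edge being $1$-regular is very restrictive) first to build intuition, then treat $(18,7,1)$ and the others, possibly appealing to a computer search to confirm non-existence for the one or two most stubborn triples, noting that $(24,8,2)$ is precisely the boundary case realised by the construction of Section~\ref{sec:construction}.
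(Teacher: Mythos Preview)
Your derivation of $\lambda=c-2$ and $v=(\lambda+2)(k-\lambda)$ is correct and is exactly the counting behind the paper's argument, but your plan leaves the whole case analysis open and makes it longer than necessary because you are missing the two structural lemmas that do the real work. First, Proposition~\ref{pro:cliqueBound} (quoted from~\cite{Greaves:2017qy}) gives $c\geqslant 4$ for any non-strongly-regular edge-regular graph with a regular clique; this wipes out your entire $\lambda\in\{0,1\}$ families, so the triangle-free discussion and $(18,7,1)$, $(21,8,1)$ never arise. Second, Lemma~\ref{lem:kbound} proves $k\geqslant 2(c-1)$, with equality forcing the $c\times c$ grid (hence strong regularity). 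Its proof is precisely the common-neighbour inequality you already set up---for $x\in\mathcal C$ and $y\sim x$, $y\notin\mathcal C$, all common neighbours lie in $\Gamma(x)\setminus\mathcal C$, so $c-2=\lambda\leqslant k-c$---but you stop at the trivial bound $k\geqslant\lambda+1$ instead of extracting this. With $c\geqslant 4$ and $k\geqslant 2c-1$ one gets $v=c(k-c+2)\geqslant 4\cdot 5=20$, and the \emph{only} parameter set below $24$ is $(20,7,2)$; your candidate $(20,7,3)$ already violates $k\geqslant 2c-1$.

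For $(20,7,2)$ no computer search is needed. The paper (Proposition~\ref{pro:no20}) removes a $1$-regular $4$-clique $K$, observes the remaining $16$-vertex graph $\Delta$ is $6$-regular, and counts triangles: of its $48$ edges, the $16$ lying inside some $\Gamma(x)\setminus K$ (for $x\in K$) are each in exactly one triangle of $\Delta$, and the other $32$ are each in exactly two, giving $16+2\cdot 32=80$, not divisible by $3$---contradiction. Your appeal to Evans et al.\ for $v\leqslant 16$ is legitimate but redundant once $c\geqslant 4$ is available.
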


First we establish a lower bound on the vertex degree.
For a graph $\Gamma$ and a vertex $x \in V(\Gamma)$, let $\Gamma(x)$ denote the set of neighbours of $x$.
The $q \times q$ \textbf{grid} (also known as the \emph{square lattice graph}) is defined to be the Cartesian product of two complete graphs of order $q$.
It is well-known~\cite{bcn89} that the $q\times q$~grid is strongly regular with parameters $(q^2,2(q - 1),q - 2,2)$.

\begin{lemma}\label{lem:kbound}
	Let $\Gamma$ be a non-complete $k$-regular edge-regular graph having a $1$-regular clique of order $c$.
	Then $k \geqslant 2(c-1)$.
	In the case of equality, $\Gamma$ is the $c \times c$~grid and is thus strongly regular.
\end{lemma}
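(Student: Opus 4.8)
The plan is to fix a $1$-regular clique $\mathcal C$ of order $c$, pick a vertex $x \notin \mathcal C$, and count common neighbours inside $\mathcal C$ carefully. By $1$-regularity, $x$ has a unique neighbour $y \in \mathcal C$. The key observation is that for each of the $c-1$ vertices $w \in \mathcal C \setminus \{y\}$, the pair $x,w$ is nonadjacent but $y$ is a common neighbour; more usefully, run the count the other way: for the adjacent pair $x \sim y$, every vertex of $\mathcal C \setminus \{y\}$ is a neighbour of $y$ but not of $x$, so these $c-1$ vertices lie among the $k - \lambda - 1$ neighbours of $y$ outside $\mathcal C$-adjacency-with-$x$... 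Let me instead set up the clean double count: consider the $c$ vertices of $\mathcal C$ together with $x$, and count edges from $x$-neighbours to $\mathcal C$. Since $\mathcal C$ is a clique of order $c$ and $\Gamma$ is edge-regular with parameter $\lambda$, each vertex $w \in \mathcal C$ has exactly $\lambda - (c-2)$ neighbours outside $\mathcal C$ that are common with a fixed other clique vertex; but more directly, each $w \in \mathcal C$ has $k - (c-1)$ neighbours outside $\mathcal C$.

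First I would establish the degree bound. Take $x \notin \mathcal C$ with unique neighbour $y \in \mathcal C$. For any $w \in \mathcal C \setminus\{y\}$: since $y \sim w$ and $x$ is a common neighbour candidate — actually $x \not\sim w$, so I look at the adjacent pair $y \sim w$. The common neighbours of $y$ and $w$ include the other $c-2$ vertices of $\mathcal C$, hence $y$ and $w$ have at most $\lambda - (c-2)$ common neighbours outside $\mathcal C$. Now here is the crucial step: I would count, for fixed $w \in \mathcal C\setminus\{y\}$, the neighbours of $w$ that are also neighbours of $x$. Consider instead the pair $x$ and $y$ (adjacent, $\lambda$ common neighbours) versus the structure forced by the clique. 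The cleanest route: each neighbour $z$ of $x$ with $z \notin \mathcal C$ is adjacent to at most one vertex of $\mathcal C$... no — $z$ could be adjacent to several. The right move is to count pairs $(z,w)$ with $z \sim x$, $z \sim w$, $w \in \mathcal C \setminus \{y\}$: on one hand, for each $w$ this is the number of common neighbours of $x$ and $w$; since $x \not\sim w$ and $y$ is one such common neighbour, and actually I expect we can show $x,w$ have a controlled number of common neighbours. On the other hand, summing over $w$ and comparing with the $k$ neighbours of $x$ gives the inequality $k \geq 2(c-1)$.

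Concretely: let $w \in \mathcal C \setminus \{y\}$. Since $w \sim y$ and $w \not\sim x$ while $y \sim x$, and the clique is $1$-regular, I claim $x$ and $w$ have exactly $\lambda - (c-2) + 1$... I would verify the exact count by applying edge-regularity to $w \sim y$ and subtracting the $c-2$ clique-internal common neighbours, using that any common neighbour of $x$ and $w$ lying outside $\mathcal C$, together with $y$, must be a common neighbour of $y$ and $w$. This yields that each $w$ contributes a fixed number of neighbours of $x$, these sets are disjoint across the $c-1$ choices of $w$ (a vertex adjacent to two clique vertices $w,w'$ together with $y$ — handled by the $1$-regularity / counting), and adding the vertex $y$ itself gives at least $2(c-1)$ distinct neighbours of $x$, so $k \geq 2(c-1)$. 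The main obstacle is pinning down the disjointness and getting the exact contribution of each $w$ so that equality forces every inequality used to be tight.

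For the equality case: if $k = 2(c-1)$, then every neighbour of $x$ (for every $x \notin \mathcal C$) is accounted for with no slack, which forces $\lambda = c-2$ and forces strong regularity with $\mu = 2$; I would then invoke the well-known classification that a strongly regular graph with $\mu = 2$ and $\lambda = k/2 - 1$, or directly with parameters $(c^2, 2(c-1), c-2, 2)$, is the $c \times c$ grid (citing \cite{bcn89}). I expect the hard part to be organising the counting so that the equality analysis drops out cleanly rather than requiring a separate argument; I would phrase the whole thing as a single chain of (in)equalities whose equality conditions are transparent.
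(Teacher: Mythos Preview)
Your proposal has a genuine gap: you never establish that $\lambda = c-2$, and in fact you treat this as something that would only be ``forced'' in the equality case. But it holds unconditionally. Take two adjacent vertices $u,v \in \mathcal C$: their common neighbours include the other $c-2$ vertices of $\mathcal C$, and by $1$-regularity no vertex outside $\mathcal C$ is adjacent to both $u$ and $v$, so $\lambda = c-2$ exactly. The paper begins with this observation (taking $x \in \mathcal C$, not outside it), and then picks $y \sim x$ with $y \notin \mathcal C$: every common neighbour of $x$ and $y$ lies among the $k-(c-1)$ neighbours of $x$ outside $\mathcal C$ (since $y$ has only $x$ as a neighbour in $\mathcal C$), and $y$ itself is one of those, so $c-2 = \lambda \le k - (c-1) - 1$, i.e.\ $k \ge 2(c-1)$. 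That is the whole inequality argument.

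By contrast, your double count from outside the clique does not close. The specific claim that ``any common neighbour of $x$ and $w$ lying outside $\mathcal C$ \dots\ must be a common neighbour of $y$ and $w$'' is false: a vertex $z \notin \mathcal C$ with $z \sim x$ and $z \sim w$ need not be adjacent to $y$. Since $x \not\sim w$ and edge-regularity gives you no control over $\mu$, you cannot bound the number of common neighbours of $x$ and $w$ this way, and the contributions you hope to sum over $w \in \mathcal C \setminus \{y\}$ are neither determined nor disjoint. For the equality case, the paper also avoids any classification citation: tightness in the inequality above forces $y$ to be adjacent to \emph{every} neighbour of $x$ outside $\mathcal C$, so $\Gamma(x)$ is the disjoint union of two $(c-1)$-cliques for every vertex $x$, and $\Gamma$ is then directly seen to be the $c \times c$ grid. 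Your plan to cite a parameter classification would still need to rule out the Shrikhande graph at $c=4$, and you have not in any case argued that $\Gamma$ is strongly regular.
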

\begin{proof}
	Set $m = k/(c-1)$.
	Since $\Gamma$ is not complete, we have $m > 1$.
	Let $\mathcal C$ be a regular clique of order $c$.
	Let $x$ be a vertex in $\mathcal C$.
	Since there are no edges between $\Gamma(x) \cap \mathcal C$ and $\Gamma(x) \backslash \mathcal C$, we find that $\lambda = c-2$.
	
	Now suppose $y$ is a vertex adjacent to $x$ but not in $\mathcal C$.
	Note that $x$ has $k-(c-1)=(m-1)(c-1)$ neighbours outside of $\mathcal C$.
	Hence the number of common neighbours of $x$ and $y$ is at most $(m-1)(c-1) -1$.
	Therefore $c-2 \leqslant (m-2) (c-1)-1$.
	Again, since $\Gamma$ is not complete, we have $c-1 \geqslant 1$.
	Hence we must have $m \geqslant 2$.
	
	In the case of equality, we see that $y$ is adjacent to every neighbour of $x$ outside $\mathcal C$.
	Furthermore, the subgraph induced on the neighbourhood of $x$ is the disjoint union of two complete graphs each of order $c-1$.
	Therefore, $\Gamma$ is the Cartesian product of two complete graphs of order $c$, i.e., the $c \times c$~grid.
\end{proof}

Next, we need a lower bound on the size of a regular clique.

\begin{proposition}[{\cite[Proposition 5.2]{Greaves:2017qy}}]\label{pro:cliqueBound}
	Let $\Gamma$ be an edge-regular graph having a regular clique.
	Suppose that $\Gamma$ is not strongly regular.
	Then $\Gamma$ has a regular clique of order at least $4$.
\end{proposition}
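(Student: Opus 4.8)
The plan is to prove the contrapositive in the following sharpened form: if $\Gamma$ is a $(v,k,\lambda)$-edge-regular graph carrying an $e$-regular clique $\mathcal C$ of order $c \leqslant 3$, then $\Gamma$ is strongly regular. Since $\Gamma$ is assumed to possess a regular clique but not to be strongly regular, every regular clique of $\Gamma$ must then have order at least $4$, which yields the statement. The argument is a finite case analysis on the pair $(c,e)$, where $1 \leqslant e \leqslant c$.

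First I would record two counting identities valid for any $e$-regular clique $\mathcal C$ of order $c$. Counting the edges between $\mathcal C$ and its complement in two ways gives $(v-c)e = c(k-c+1)$, since each of the $v-c$ outside vertices sends $e$ edges into $\mathcal C$ while each of the $c$ clique vertices sends $k-c+1$ edges out of $\mathcal C$. Counting the common neighbours of an edge $\{x,y\}$ with $x,y \in \mathcal C$ shows that the $c-2$ vertices of $\mathcal C \setminus \{x,y\}$ are common neighbours, so $\lambda = (c-2) + p$, where $p$ is the number of vertices outside $\mathcal C$ adjacent to both $x$ and $y$; in particular $\lambda \geqslant c-2$, and $p$ is forced to be independent of the chosen edge of $\mathcal C$.

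Next I would dispatch the degenerate cases. If $e = c$, then every vertex outside $\mathcal C$ is adjacent to all of $\mathcal C$, so each $x \in \mathcal C$ has degree $v-1$ and, by edge-regularity, $\Gamma$ is complete and hence strongly regular; this settles $c=1$ entirely, as well as the top regularity for $c \in \{2,3\}$. For the clique of order $2$ with $e=1$ the identities give $\lambda = 0$ and $v = 2k$, so $\Gamma$ is triangle-free; analysing the neighbourhoods of the two endpoints $x,y$ of the $1$-regular edge, triangle-freeness forces $\Gamma(x)\setminus\{y\}$ and $\Gamma(y)\setminus\{x\}$ to induce a complete bipartite graph, exhibiting $\Gamma$ as $K_{k,k}$, which is strongly regular. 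The only surviving possibilities are the $1$-regular and the $2$-regular triangle.

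The main work, and the main obstacle, is the order-$3$ analysis. For the $1$-regular triangle one has $\lambda = 1$ and $v = 3(k-1)$, so $\Gamma$ contains no $K_4$ and the vertices outside the triangle split into three equal neighbourhood classes; I would use edge-regularity together with $\lambda = 1$ to pin down the adjacencies within and between these classes and then verify that an arbitrary non-adjacent pair has a constant number of common neighbours, so that $\Gamma$ is strongly regular. For the $2$-regular triangle the identities force $v = 3\lambda$ and $k = 2\lambda$, and the outside vertices are partitioned into three equal classes according to which pair of the triangle they meet; here the absence of the small-$\lambda$ shortcut means that establishing a constant value of $\mu$ requires the most careful double counting of common neighbours across these classes. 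In each of the two triangle cases the conclusion is that $\Gamma$ is strongly regular, completing the case analysis and hence the proposition.
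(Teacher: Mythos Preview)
The paper does not prove this proposition at all: it is imported verbatim from the authors' earlier work \cite[Proposition~5.2]{Greaves:2017qy} and used as a black box, so there is no in-paper argument to compare your proposal against.

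Assessing your plan on its own merits: the degenerate cases are fine. The reduction $e=c \Rightarrow k=v-1$ is correct, and your treatment of the $1$-regular edge ($c=2$, $e=1$), forcing $\lambda=0$, $v=2k$, and then $\Gamma\cong K_{k,k}$ by a neighbourhood count, is complete and correct.

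The problem is the two triangle cases, which you yourself flag as ``the main work, and the main obstacle'' and then do not carry out. For $c=3$, $e=1$ you correctly deduce $\lambda=1$ and $v=3(k-1)$, and the partition of $V\setminus T$ into three classes of size $k-2$, each inducing a perfect matching, follows easily. But the promise to ``pin down the adjacencies within and between these classes and then verify that an arbitrary non-adjacent pair has a constant number of common neighbours'' is exactly the substance of the proposition in this case, and nothing you have written constrains the bipartite graphs between the classes enough to force a single value of $\mu$. The situation for $c=3$, $e=2$ (where you correctly obtain $k=2\lambda$, $v=3\lambda$) is worse, since there is no small-$\lambda$ shortcut and you offer only the phrase ``the most careful double counting''. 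As it stands, the proposal is a correct set-up followed by an unsubstantiated assertion in the two cases that matter; it is not yet a proof, and you should either supply the missing arguments or, as the paper does, cite the result.
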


The final ingredient is a nonexistence result for a graph on $20$ vertices.

\begin{proposition}\label{pro:no20}
	There does not exist an edge-regular graph with parameters $(20,7,2)$ and a $1$-regular clique.
\end{proposition}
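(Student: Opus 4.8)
The plan is to assume for contradiction that $\Gamma$ is a $(20,7,2)$-edge-regular graph with a $1$-regular clique $\mathcal{C}$, and to derive strong combinatorial constraints until no graph survives. First I would pin down the clique: by the argument in the proof of Lemma~\ref{lem:kbound}, a $1$-regular clique of order $c$ forces $\lambda = c-2$, so here $c = 4$; thus $\mathcal{C}$ has $4$ vertices, and every vertex outside $\mathcal{C}$ has exactly one neighbour in $\mathcal{C}$. Counting edges between $\mathcal{C}$ and its complement: $16$ outside vertices each send one edge in, while each of the $4$ vertices of $\mathcal{C}$ has $7 - 3 = 4$ neighbours outside $\mathcal{C}$, giving $16$ such edges — consistent, and in fact it shows the $16$ outside vertices are partitioned into four groups of $4$, group $V_j$ being the outside-neighbours of the vertex $c_j \in \mathcal{C}$.

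Next I would analyze the local structure. For a vertex $c_j \in \mathcal{C}$, its neighbourhood consists of the $3$ other clique vertices plus the $4$ vertices of $V_j$; since $\lambda = 2$, each vertex of $V_j$ has exactly $2$ neighbours among $\Gamma(c_j)$, but it has $0$ neighbours in $\mathcal{C}\setminus\{c_j\}$ (only one neighbour in $\mathcal{C}$, namely $c_j$), so each vertex of $V_j$ has exactly $2$ neighbours inside $V_j$. Hence the induced subgraph on $V_j$ is $2$-regular on $4$ vertices, i.e.\ a $4$-cycle. This already forces a rigid skeleton: $\mathcal{C}$ together with the four $4$-cycles $V_1,\dots,V_4$, using up $4\cdot(3+2) = 20$ of the edge-endpoints at clique vertices and $4\cdot 2 = 8$ edges inside the $V_j$'s. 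Now count remaining degrees: a vertex $u \in V_j$ has degree $7$, with $1$ edge to $c_j$ and $2$ edges inside $V_j$, leaving exactly $4$ edges from $u$ to $\bigcup_{k\neq j} V_k$. So the bipartite-type structure between the four $4$-cycles is $4$-regular from each side.

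Then I would exploit the $\lambda = 2$ condition on edges between different $V_j$'s and on the non-edges, together with a global edge count ($20\cdot 7/2 = 70$ edges total; $6$ in $\mathcal{C}$, $8$ inside the $V_j$'s, hence $56$ edges among the cross-pairs, matching $16\cdot 4/2$). The key step is to track common neighbours carefully: if $u \in V_j$ and $v \in V_k$ with $j \neq k$ are adjacent, their two common neighbours lie among the cross-$V$ vertices (they cannot be $c_j$, $c_k$, or any clique vertex, and in/out-of-cycle constraints limit the rest); if $u,v$ are non-adjacent the count is whatever it is, but combined with edge-regularity on the many adjacent pairs this pins down the cross-adjacency pattern up to a small number of cases. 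I expect the argument to reduce to a short case analysis on how the four $4$-cycles are joined $4$-regularly — essentially classifying $4$-regular bipartite-like configurations on $4+4+4+4$ vertices compatible with $\lambda=2$ — and in each case a direct contradiction appears, either a vertex pair with the wrong number of common neighbours or a parity/counting clash.

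The main obstacle is the last step: the skeleton is determined quickly, but ruling out every $4$-regular joining of the four $4$-cycles requires either a clever invariant (for instance, counting triangles through $\mathcal{C}$ versus total triangles, since each edge lies in exactly $\lambda = 2$ triangles giving $70\cdot 2/3$ — not an integer, which would immediately finish the proof!). Indeed I would check this parity obstruction first: in a $(v,k,\lambda)$-edge-regular graph the number of triangles is $vk\lambda/6 = 20\cdot 7\cdot 2/6 = 140/3$, which is not an integer, so \emph{no} $(20,7,2)$-edge-regular graph exists at all, and the proposition follows immediately without ever invoking the clique. If that shortcut is deemed too slick or is somehow blocked, the fallback is the case analysis sketched above, where the hard part is organizing the cross-cycle adjacencies into manageable cases.
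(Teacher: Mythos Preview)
Your proposal is correct, and the decisive observation you make at the end --- that the total triangle count $vk\lambda/6 = 20\cdot 7\cdot 2/6 = 140/3$ is not an integer --- already rules out \emph{every} $(20,7,2)$-edge-regular graph, with or without a regular clique. All of the preliminary structure you develop (the four $4$-cycles $V_j$, the $4$-regular cross-joins) is accurate but, as you yourself note, unnecessary once this global parity obstruction is spotted.

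The paper's proof is in the same spirit but takes a slightly less direct route: it first uses the $1$-regular $4$-clique $K$ to pass to the $16$-vertex, $6$-regular induced subgraph $\Delta = \Gamma\setminus K$, then argues that in $\Delta$ exactly $16$ edges lie in one triangle and the remaining $32$ edges lie in two, giving $(16 + 2\cdot 32)/3 = 80/3$ triangles in $\Delta$ --- again not an integer. So both arguments are divisibility obstructions from triangle counting; the difference is that the paper localises to the complement of the clique, whereas your count is global on $\Gamma$. Your version is shorter, does not invoke the clique hypothesis at all, and yields the nominally stronger conclusion that no $(20,7,2)$-edge-regular graph exists. The paper's version, on the other hand, illustrates how the $1$-regular clique controls the triangle structure of its complement, which is perhaps more in keeping with the theme of the section.
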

\begin{proof}
	Suppose, for a contradiction, there does exist such a graph $\Gamma$.
	By Proposition~\ref{pro:cliqueBound}, $\Gamma$ must have a regular clique $\mathcal C$ of size at least $4$.
	Moreover, since $\lambda = 2$, the clique $\mathcal C$ must have size $4$.  
	Let $x \in \mathcal C$.
	The subgraph induced on $\Gamma(x)$ is the disjoint union of a $3$-cycle $T$ and a $4$-cycle $C$.
	Set $K = \{x\} \cup V(T)$ and let $\Delta$ be the subgraph induced on the vertices $V(\Gamma) \backslash K$.
	Observe that, since $K$ is a $1$-regular clique of $\Gamma$, the subgraph $\Delta$ is $6$-regular.
	Furthermore, observe that each of the $16$ pairs of adjacent vertices in $\bigcup_{x \in K} \Gamma(x) \backslash K$ has a common vertex in $K$.
	Hence there are $16$ edges in $\Delta$ that are each contained in precisely one triangle and the remaining $32$ edges are in precisely $2$ triangles.
	Therefore $\Delta$ has $(16+2\cdot 32)/3$ triangles.
	Since this number is not an integer, we establish a contradiction.
\end{proof}

Now the proof of Theorem~\ref{thm:24small}.
\begin{proof}[Proof of Theorem~\ref{thm:24small}]
	By Proposition~\ref{pro:cliqueBound}, $\Gamma$ must have a regular clique $\mathcal C$ of size $c \geqslant 4$.
	Furthermore, by Lemma~\ref{lem:kbound}, the degree of the vertices of $\Gamma$ is at least $7$.
	If $k \geqslant 8$ then, since $\mathcal C$ is $1$-regular, $\Gamma$ must have at least $24$ vertices.
	It therefore suffices to consider the case when $k = 7$ and $c=4$.
	In this case, $\Gamma$ must be edge-regular with parameters $(20,7,2)$.
	But by Proposition~\ref{pro:no20}, no such graph exists.
\end{proof}

\section*{Acknowledgement} 
\label{sec:acknowledgement}

We thank Rhys Evans, Leonard Soicher, and an anonymous referee for their comments, which greatly improved this paper.



%

\bibliographystyle{myplain}
\bibliography{sbib}

\end{document}